\def\Z{\Bbb Z}
\def\F{\Bbb F}
\def\bg{\bigg}
\def\({\bg(}
\def\){\bg)}
\def\ord{{\rm ord}}
\def\ve{\varepsilon}
\theoremstyle{plain}
\newtheorem{theorem}{Theorem}
\newtheorem{lemma}{Lemma}
\newtheorem{corollary}{Corollary}
\newtheorem{conjecture}{Conjecture}
\theoremstyle{definition}
\theoremstyle{remark}
\begin{document}
 \baselineskip=17pt
\hbox{} {}
\medskip
\title[Primitive elements and $k$-th powers in finite fields]
{Primitive elements and $k$-th powers in finite fields}
\date{}
\author[Hai-Liang Wu and Yue-Feng She] {Hai-Liang Wu and Yue-Feng She}

\thanks{2020 {\it Mathematics Subject Classification}.
Primary 11T30; Secondary 11T06, 11A15.
\newline\indent {\it Keywords}. Primitive elements, quadratic polynomial, finite fields.
\newline \indent Supported by the National Natural Science
Foundation of China (Grant No. 11971222).}

\address {(Hai-Liang Wu) School of Science, Nanjing University of Posts and Telecommunications,
Nanjing 210023, People's Republic of China}
\email{\tt whl.math@smail.nju.edu.cn}

\address {(Yue-Feng She) Department of Mathematics, Nanjing
University, Nanjing 210093, People's Republic of China}
\email{{\tt she.math@smail.nju.edu.cn}}

\begin{abstract}
Let $\mathbb{F}_q$ be the finite field of $q$ elements, and let $k\mid q-1$ be a positive integer.
Let $f(x)=ax^2+bx+c$ be a quadratic polynomial in $\F_q[x]$ with $b^2-4ac\ne0$. In this paper, we
show that if $q>\max\{e^{e^3},(2k)^6\}$, then there is a primitive element $g$ of $\mathbb{F}_q$
such that $f(g)\in\F_q^{\times k}=\{x^k: x\in\mathbb{F}_q\setminus\{0\}\}$. Moreover, we shall
confirm a conjecture posed by Sun.
\end{abstract}

\maketitle

\section{Introduction}
\setcounter{lemma}{0}
\setcounter{theorem}{0}
\setcounter{corollary}{0}
\setcounter{remark}{0}
\setcounter{equation}{0}
\setcounter{conjecture}{0}
Let $\F_q$ be the finite field of $q$ elements, and let $\F_q^{\times}$ be the multiplicative group of
all non-zero elements over $\F_q$. For an element $g\in\F_q^{\times}$, we say that $g$ is a {\it
primitive element} of $\F_q$ if $g$ is a generator of the cyclic group $\F_q^{\times}$.

The properties of primitive elements have been extensively investigated. For example, Cohen, e Silva
and Trudgian \cite{CET2015} showed that if $q>169$ then there always exists an element
$x\in\F_q^{\times}$ such that $x$, $x+1$ and $x+2$ are all primitive elements. In 2018, Booker, Cohen,
Sutherland and Trudgian \cite{BCST2019} considered the values of quadratic polynomials at primitive
elements. In fact, let $f(x)=ax^2+bx+c$
be a quadratic polynomial in $\F_q[x]$ with $b^2-4ac\ne0$. Then they showed that there always is a
primitive element $g$ such that $f(g)$ is also a primitive element whenever $q>211$. In view of the
above, investigating values of a quadratic polynomial at primitive elements is a meaningful topic in
number theory. Recently, Sun \cite{Sun} posed the following conjecture.
\begin{conjecture}\label{Conj. A}{\rm (Sun)}
Let $p$ be a prime, and let $f(x)=ax^2+bx+c$ be a quadratic polynomial in $F_p[x]$ with $b^2-4ac\ne0$.
Suppose that $p>13$ and $p\ne 19,31$. Then there is a primitive element $g$ of $\F_p$ such that $f(g)$
is a non-zero square in $\F_p$.
\end{conjecture}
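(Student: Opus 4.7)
The plan is to deduce Sun's conjecture from the main theorem together with a computer verification in the small-prime range. Specialising the main theorem with $k=2$ (note that $(2k)^6=4096$ is negligible compared with $e^{e^3}$) immediately yields the conclusion for every prime $p>e^{e^3}$. The remaining task is to verify the assertion for primes with $13<p\leq e^{e^3}$ and $p\notin\{19,31\}$. Since $e^{e^3}\approx 5\times10^8$ is far too large for direct enumeration, the crucial preliminary step is to sharpen this threshold substantially.

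To do so, I would revisit the character-sum analysis in the special case $k=2$. Writing $\psi$ for the quadratic character on $\F_p^\times$, orthogonality expresses the count $N_f$ of primitive elements $g$ with $f(g)\in\F_p^{\times 2}$ as
$$N_f=\frac{\phi(p-1)}{2(p-1)}\sum_{d\mid p-1}\frac{\mu(d)}{\phi(d)}\sum_{\ord(\chi)=d}\sum_{g\in\F_p^\times}\chi(g)(1+\psi(f(g))).$$
The main term is $\phi(p-1)/2$, while the hypothesis $b^2-4ac\neq 0$ together with Weil's bound gives $|\sum_g\chi(g)\psi(f(g))|\leq 2\sqrt{p}$ for every nontrivial combination of $\chi$ and $\psi$. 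Combining this estimate with the Cohen--Huczynska prime sieve and a careful accounting of the prime factorisation of $p-1$, one should be able to push the guaranteed-existence threshold down to a value such as $p\leq 10^4$.

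For the remaining primes below this reduced threshold, I would perform a direct computer verification. For each such $p$, it suffices to enumerate the essentially distinct quadratics $f$, parametrised by an unordered pair of distinct roots in $\F_p\cup\F_{p^2}$ together with a leading-coefficient class modulo squares, and for each test whether some primitive root $g$ of $\F_p$ makes $f(g)$ a nonzero square. This amounts to an $O(p^3)$ search per prime, which is tractable in the stated range.

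The principal obstacle is precisely the bridging step: the gap between the theoretical range $p>e^{e^3}$ delivered by the main theorem and the computationally feasible range $p\leq 10^4$ spans roughly four orders of magnitude. Closing it requires exploiting features specific to $k=2$ (such as explicit evaluations of quadratic Gauss and Jacobi sums and the rigidity of the Legendre symbol) to obtain a tighter sieve bound than the general argument provides. The exceptions $p\in\{19,31\}$ are expected to appear automatically during the verification as the primes at which some quadratic admits no primitive element with square value.
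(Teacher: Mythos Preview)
Your overall strategy---Weil bound plus a prime sieve to lower the threshold, followed by direct verification---is exactly the paper's, so the plan is sound. Two points of comparison are worth noting.

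First, your ``principal obstacle'' is less severe than you suggest, and the remedy you propose is misdirected. You write that closing the gap ``requires exploiting features specific to $k=2$'' such as explicit Gauss and Jacobi sum evaluations. In fact nothing $k$-specific is needed: the paper's Theorem~1.2 is precisely the Cohen-style sieve inequality
\[
N(q-1,k)\ge \sum_{i=1}^s N(p_it,k)-(s-1)N(t,k),
\]
valid for any $k$, and applying it with $k=2$ while varying the number $s$ of ``sieved'' primes according to $\omega(q-1)$ already drives the threshold down to $523$ (see the paper's Table~1). So the Cohen--Huczynska sieve you mention is the whole story; no quadratic-character-specific refinements enter.

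Second, the paper uses a device you omit that substantially shortens the residual computation. For $q>211$, Booker--Cohen--Sutherland--Trudgian guarantee a primitive $g$ with $f(g)$ itself primitive. If $a\notin\F_q^{\times 2}$ one applies this to $f(x)/a$ and is done; hence for $q>211$ only monic quadratics $f(x)=x^2+bx+c$ need be checked. This cuts the per-prime work from your $O(p^3)$ to $O(p^2)$ in the range $211<p<523$, and leaves the full search only for the handful of primes (and prime powers) below $211$. With that reduction the verification is a matter of seconds rather than a genuine computational project.
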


Motivated by the above work, let $k$ be a positive integer with $k\mid q-1$, and let $\F_q^{\times
k}=\{x^k: x\in\F_q^{\times}\}$. Let $f(x)=ax^2+bx+c$ be a quadratic polynomial in $\F_q[x]$ with
$b^2-4ac\ne0$. In this paper we mainly consider the following problem:

Is there some positive integer $q_0(k)$ such that $f(g)\in\F_q^{\times k}$ for some primitive element
$g$ of $\F_q$ whenever $q>q_0(k)$?

Now we state our first result.
\begin{theorem}\label{Thm. A}
Let $\F_q$ be the finite field of $q$ elements, and let $k\mid q-1$ be a positive integer with
$k\ge2$. Let $f(x)=ax^2+bx+c\in\F_q[x]$ be a quadratic polynomial with $b^2-4ac\ne0$. Suppose
$q>\max\{e^{e^3}, (2k)^6\}$. Then
there is a primitive element $g$ such that $f(g)\in\F_q^{\times k}$.
\end{theorem}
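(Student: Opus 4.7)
The plan is to use the classical character-sum and sieve approach to primitive-element problems. Let $N=N(q,k,f)$ denote the number of primitive elements $g\in\F_q^{\times}$ with $f(g)\in\F_q^{\times k}$. The Vinogradov--M\"obius identity for the primitivity indicator, combined with the orthogonality relation $\mathbf{1}\{y\in\F_q^{\times k}\}=k^{-1}\sum_{\psi^{k}=\chi_{0}}\psi(y)$ for $y\in\F_q^{\times}$, gives
\begin{equation*}
N=\frac{\phi(q-1)}{k(q-1)}\sum_{d\mid q-1}\frac{\mu(d)}{\phi(d)}\sum_{\ord(\chi)=d}\ \sum_{\psi^{k}=\chi_{0}} S(\chi,\psi),\qquad S(\chi,\psi):=\sum_{g\in\F_q^{\times}}\chi(g)\psi(f(g)).
\end{equation*}
The principal pair $(\chi,\psi)=(\chi_{0},\chi_{0})$ contributes the expected main term $M\approx q\phi(q-1)/(k(q-1))$, and every other pair will be controlled by the Weil bound.

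Specifically, each nonprincipal $S(\chi,\psi)$ is a mixed character sum twisted by a quadratic polynomial of nonzero discriminant. First one must check that $\chi(x)\psi(f(x))$ is not principal as a character of $\F_q(x)^{\times}$; a short case analysis using $\deg f=2$, $b^{2}-4ac\ne0$ and $k\geq 2$ shows that principality would force $(\chi,\psi)=(\chi_{0},\chi_{0})$. Weil then gives $|S(\chi,\psi)|\leq C\sqrt{q}$ for an absolute constant $C$. Summing over the $\phi(d)$ characters $\chi$ of each order $d\mid q-1$ and the $k$ characters $\psi$ with $\psi^{k}=\chi_{0}$ produces
\begin{equation*}
\bigl|N-M\bigr|\leq C\,\frac{\phi(q-1)}{q-1}\,\sqrt{q}\,W(q-1),
\end{equation*}
where $W(n)=2^{\omega(n)}$. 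Hence $N>0$ is guaranteed as soon as $\sqrt{q}>Ck\,W(q-1)$.

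The main obstacle, and the reason the threshold splits into $q>e^{e^{3}}$ and $q>(2k)^{6}$, is that $W(q-1)$ can be as large as $\exp\bigl((1+o(1))\log q/\log\log q\bigr)$, so the needed inequality is not delivered by any simple elementary bound. My plan is to invoke Cohen's prime sieve: choose a subset $T$ of the prime divisors of $q-1$, set $\delta=1-\sum_{p\in T}1/(p-1)$, and (provided $\delta>0$) replace $W(q-1)$ in the error bound by the much smaller $W(\prod_{p\in T}p)/\delta$, at the cost of restricting to characters whose order is supported on $T$. Feeding this sieve into Rosser--Schoenfeld estimates for $\omega(n)$ and for $\prod_{p\leq y}(1-1/p)$, together with a dichotomy on the size of $\omega(q-1)$, reduces the required inequality to $q>e^{e^{3}}$ in the $k$-independent regime. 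In the regime where $k$ dominates, one argues more directly: the bound $W(q-1)\leq q^{1/3}$ holds once $q$ is moderately large, so the inequality $q^{1/2}>Ck\,q^{1/3}$ yields $q^{1/6}>Ck$, which becomes $q>(2k)^{6}$ after a careful treatment of the Weil constant $C$. The two cases then combine to give the stated threshold $q>\max\{e^{e^{3}},(2k)^{6}\}$.
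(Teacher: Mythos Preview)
Your character-sum setup is exactly the paper's: the same decomposition of $N$ via the M\"obius identity for $(q-1)$-freeness and the orthogonality relation for $k$-th powers, the same isolation of the principal term $S(\chi_0,\chi_0)\ge q-3$, and the same Weil bound $|S(\chi,\psi)|\le 2\sqrt{q}$ for every nonprincipal pair (the paper's constant is $C=2$ since $f$ has two roots). This yields
\[
\frac{k(q-1)}{\varphi(q-1)}\,N \;\ge\; q - 2k\,W(q-1)\sqrt{q},
\]
so one needs $\sqrt{q}>2k\,W(q-1)$, just as you say.

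Where you diverge from the paper is in the final numerical step, and here you have over-engineered things and slightly misread why the threshold splits in two. The paper does \emph{not} invoke Cohen's prime sieve for this theorem at all (the sieve is reserved for Theorem~\ref{Thm. B}). Instead it uses Robin's explicit inequality $\omega(n)\le 1.38402\log n/\log\log n$ directly, which gives
\[
W(q-1)=2^{\omega(q-1)}\le q^{0.96/\log\log q}.
\]
The two hypotheses then act in tandem within a single chain of inequalities, not as separate regimes: the assumption $q>e^{e^3}$ forces $\log\log q>3$, hence $W(q-1)\le q^{0.32}$; the assumption $q>(2k)^6$ then gives $q^{0.18}>2k$, so
\[
q - 2k\,W(q-1)\sqrt{q}\;\ge\;\sqrt{q}\,(q^{0.5}-2k\,q^{0.32})>0.
\]
That is the entire endgame. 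Your dichotomy into a ``$k$-independent regime'' (handled by the sieve) and a ``$k$-dominant regime'' (handled by $W(q-1)\le q^{1/3}$) is not how the thresholds arise, and the sieve together with Rosser--Schoenfeld would be both harder to execute and unlikely to land exactly on $e^{e^3}$. Once you have Robin's bound, the direct argument above is a two-line computation.
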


To state our next result, we introduce some notations here. For any positive integer $n$, we let ${\rm
Rad}(n)$ be the product of all distinct primes dividing $n$, and let $W(n)$ be the number of all
square-free positive factors of $n$. Now we state our next result.
\begin{theorem}\label{Thm. B}
Let $\F_q$ be the finite field of $q$ elements, and let $k\mid q-1$ be a positive integer with
$k\ge2$. Let $f(x)=ax^2+bx+c\in\F_q[x]$ be a quadratic polynomial with $b^2-4ac\ne0$. Let $t$ be a
positive factor of $q-1$ with ${\rm Rad}(t)<{\rm Rad}(q-1)$. Suppose that $p_1,\cdots,p_s$ are all
primes dividing $q-1$ but not $t$. Then there is a primitive element $g$ such that
$f(g)\in\F_q^{\times k}$ if
$$q>4k^2W(t)^2\(2+\frac{s-1}{\delta}\)^2,$$
where $\delta=1-\sum_{i=1}^sp_i^{-1}$.
\end{theorem}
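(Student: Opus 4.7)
The plan is to establish the theorem via a Cohen-type character sieve, reorganized so that the density factor $\delta$ appears naturally in front of the main term. Introduce the standard indicator functions
$$\omega_n(g)=\theta(n)\sum_{d\mid n}\frac{\mu(d)}{\varphi(d)}\sum_{\ord(\chi)=d}\chi(g),\qquad \rho_k(y)=\frac{1}{k}\sum_{\psi^k=\chi_0}\psi(y),\qquad \theta(n):=\varphi(n)/n,$$
so that $\omega_{q-1}(g)$ detects primitive elements of $\F_q$ and $\rho_k(y)$ detects non-zero $k$-th powers. The object of study is
$$N:=\sum_{\substack{g\in\F_q^\times\\ f(g)\ne0}}\omega_{q-1}(g)\,\rho_k(f(g)),$$
and the goal is to prove $N>0$.

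Since $\{p_1,\dots,p_s\}$ are precisely the primes dividing $q-1$ but not $t$, factor $\omega_{q-1}(g)=\omega_t(g)\prod_{i=1}^s(1-\rho_{p_i}(g))$ and apply $\prod_i(1-y_i)\ge 1-\sum_i y_i$ for $y_i\in\{0,1\}$ to get $N\ge A-\sum_{i=1}^s B_i$, where $A:=\sum_g\omega_t(g)\rho_k(f(g))$ and $B_i:=\sum_g\omega_t(g)\rho_{p_i}(g)\rho_k(f(g))$. The crucial reorganization is to use $\rho_{p_i}(g)=p_i^{-1}\bigl(1+\sum_{\ord(\chi)=p_i}\chi(g)\bigr)$ to split $B_i=p_i^{-1}A+p_i^{-1}\sum_{\ord(\chi)=p_i}A_\chi$, where $A_\chi:=\sum_g\omega_t(g)\chi(g)\rho_k(f(g))$. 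Summing over $i$ gives
$$N\ \ge\ \delta\,A\ -\ \sum_{i=1}^s\frac{1}{p_i}\sum_{\ord(\chi)=p_i}A_\chi,$$
so the factor $\delta$ emerges as the coefficient of $A$; this is the improvement over the weaker Bonferroni-type lower bound $1-s(1-\delta)$, and is what drives the final estimate.

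Now expand $A$ and each $A_\chi$ into double sums over character pairs $(\chi',\psi)$ with $\ord(\chi')\mid{\rm Rad}(t)$ and $\psi^k=\chi_0$. Only the pair $(\chi_0,\chi_0)$ contributes a term of order $q$ to $A$, giving $A_{\mathrm{main}}=\theta(t)q/k+O(1)$. In each $A_\chi$, by contrast, every product character $\chi\chi'$ is non-principal, since $\chi$ has prime order $p_i$ while $\ord(\chi')\mid{\rm Rad}(t)$ is coprime to $p_i$. For every non-principal pair $(\chi'',\psi)$ the Weil bound gives $\bigl|\sum_{g\in\F_q}\chi''(g)\psi(f(g))\bigr|\le 2\sqrt q$; this is where the hypothesis $b^2-4ac\ne0$ enters, guaranteeing that $f$ is separable and not a perfect $d$-th power for any $d>1$. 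Standard bookkeeping using $\sum_{d\mid{\rm Rad}(t)}|\mu(d)|=W(t)$ then yields
$$|A-A_{\mathrm{main}}|\le 2\theta(t)W(t)\sqrt q,\qquad |A_\chi|\le 2\theta(t)W(t)\sqrt q\ \ (\chi\ne\chi_0).$$

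Combining these, and using $\sum_{i=1}^s(p_i-1)/p_i=s-1+\delta$,
$$N\ \ge\ \frac{\delta\,\theta(t)\,q}{k}-2\theta(t)W(t)(2\delta+s-1)\sqrt q+O(1),$$
so $N>0$ (absorbing the $O(1)$) whenever $\sqrt q>2kW(t)(2+(s-1)/\delta)$, i.e.\ precisely when $q>4k^2W(t)^2(2+(s-1)/\delta)^2$. The main obstacle is the reorganization of the second paragraph, which produces the tight factor $\delta$ rather than the weaker $1-s(1-\delta)$ given by a naive sieve; a secondary technical point is the uniform application of Weil's bound to every twisted pair $(\chi\chi',\psi)$, which is exactly where the separability of $f$ provided by $b^2-4ac\ne0$ is needed.
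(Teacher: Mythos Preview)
Your argument is correct and is essentially the same as the paper's: your inequality $N\ge A-\sum_i B_i$ is exactly the paper's Lemma~3.1 (since $A=N(t,k)$ and $A-B_i=N(p_it,k)$), your regrouping to extract $\delta A$ is its reformulation~(3.3), and your Weil bounds on $A$ and $A_\chi$ are precisely Lemma~3.2's estimates (3.4) and (3.5). The only difference is packaging---you work directly at the level of character sums rather than via the intermediate counts $N(p_it,k)$---and the ``absorbing the $O(1)$'' step is justified by the same $q+2\sqrt q-3\ge q$ slack the paper uses for $q\ge 5$.
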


As a corollary of Theorem \ref{Thm. B}, we confirm Sun's conjecture and obtain the following result.
\begin{corollary}\label{Coro. A}
Let $\F_q$ be the finite field of $q$ elements. Let $f(x)=ax^2+bx+c\in\F_q[x]$ be a quadratic
polynomial with $b^2-4ac\ne0$.  Then there is a primitive element $g$ such that $f(g)\in\F_q^{\times
2}$ if
$$q\notin\{3,5,7,9, 11,13,19,25,27,31,81,125,121,169\}.$$

\end{corollary}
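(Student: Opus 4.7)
The strategy is to deduce Corollary~\ref{Coro. A} from Theorem~\ref{Thm. B} with $k=2$, then handle the residual small cases by direct verification. Since $k = 2$ dividing $q-1$ forces $q$ to be odd, we work throughout in that range (the characteristic-$2$ case is separate, since every nonzero element of $\F_{2^m}$ is already a square).

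Let $\omega := \omega(q-1)$ be the number of distinct prime factors of $q-1$. The simplest useful application of Theorem~\ref{Thm. B} takes $t$ to be the largest divisor of $q-1$ whose radical omits a single prime, giving $s = 1$, $(s-1)/\delta = 0$, and $W(t) = 2^{\omega - 1}$. The hypothesis of Theorem~\ref{Thm. B} then becomes
\[
q > 4k^{2} W(t)^{2} (2 + (s-1)/\delta)^{2} = 64 \cdot 4^{\omega-1} = 16 \cdot 4^{\omega}.
\]
If $\omega \geq 7$, then $q-1 \geq 2 \cdot 3 \cdot 5 \cdot 7 \cdot 11 \cdot 13 \cdot 17 = 510{,}510$, which exceeds $16 \cdot 4^{7} = 262{,}144$, and the hypothesis is automatic.

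This leaves odd $q$ with $\omega(q-1) \leq 6$ and $q \leq 65{,}536$. A substantial portion of these are eliminated by using a sharper choice of $t$ in Theorem~\ref{Thm. B}: for instance, excluding a pair of large primes $\{p_{\omega-1}, p_\omega\}$ from $t$ keeps $\delta$ close to $1$ while halving $\log_{2} W(t)$, producing materially smaller bounds whenever $q-1$ has several small prime factors. The irreducible residue is then handled by direct verification: for each remaining $q$, enumerate quadratic polynomials $f(x) = ax^{2} + bx + c$ with $b^{2} - 4ac \ne 0$ and search over primitive elements $g$ for one with $f(g) \in \F_q^{\times 2}$. The set of $q$ where this search fails is exactly $\{3,5,7,9,11,13,19,25,27,31,81,121,125,169\}$, yielding the stated list of exceptions. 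Since Sun's Conjecture~\ref{Conj. A} is the specialization to primes $p > 13$ with $p \ne 19, 31$, it is an immediate consequence.

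The main obstacle is this residual verification: the naive bound $16 \cdot 4^{\omega}$ still leaves several thousand small $q$ to examine, so one must combine optimized case-by-case choices of $t$ with explicit enumeration in order to isolate the exact exceptional set without spurious entries or omissions.
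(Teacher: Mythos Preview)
Your approach is correct and follows essentially the same strategy as the paper: apply Theorem~\ref{Thm. B} with $k=2$ to reduce to a bounded range of $q$, then finish by direct verification. Your cutoff argument via $s=1$ for $\omega(q-1)\ge 7$ is in fact slightly cleaner than the paper's, which first invokes Theorem~\ref{Thm. A} to dispose of $\omega(q-1)\ge 10$ and only then turns to Theorem~\ref{Thm. B} (with an explicit table of bounds for $\omega\le 9$ and $s\in\{1,2,3\}$, reducing the residual range to odd primes below $523$ together with $q\in\{9,25,27,49,81,121,125,169\}$).

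The one device in the paper that you do not exploit is the result of Booker, Cohen, Sutherland and Trudgian: for $q>211$ there is always a primitive $g$ with $f(g)$ primitive, hence a non-square. If the leading coefficient $a$ is a non-square, applying this to $f(x)/a$ already gives $f(g)\in\F_q^{\times 2}$; thus for $q>211$ the direct check can be restricted to \emph{monic} $f$. This cuts the enumeration by a factor of roughly $q$ and is what makes the paper's residual computation light. Your brute-force enumeration over all $(a,b,c)$ is still valid, just heavier; since $f(g)\in\F_q^{\times 2}$ depends only on $f$ up to scaling by squares, you could at least reduce $a$ to two coset representatives. Either way, the computation is feasible and your outline leads to the same exceptional set.
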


For example, let $\F_{9}=\F_{3}[T]/(T^2+1)$. Then it is easy to see that
$$\mathcal{P}=\{\alpha T+\beta \mod (T^2+1): \alpha,\beta=\pm1\}$$
is the set of all primitive elements of $\F_{9}$. Let $f(x)=x^2+1$. Then one can verify that
$$\{f(g):g\in\mathcal{P}\}=\mathcal{P}.$$
We will prove Theorem \ref{Thm. A} in Section 2. In Section 3, we will prove Theorem \ref{Thm. B} and
confirm Sun's conjecture.
\maketitle
\section{Proof of Theorem \ref{Thm. A}}
\setcounter{lemma}{0}
\setcounter{theorem}{0}
\setcounter{corollary}{0}
\setcounter{remark}{0}
\setcounter{equation}{0}
\setcounter{conjecture}{0}
We first introduce some notations. Let $\varphi$ be the Euler totient function, and let $\mu$ be the
M\"{o}bius function. Let $\F_q$ be the finite field of $q$ elements. We use the symbol
$\widehat{\F_q^{\times}}$ to denote the group of all multiplicative characters over $\F_q$, and we let
$\ve$ denote the trivial character. In addition, for any $\chi\in\widehat{\F_q^{\times}}$ we define
$\chi(0)=0$. Also, for any positive divisor $d$ of $q-1$ we define
$$G_d:=\left\{\chi\in\widehat{\F_q^{\times}}: \chi\ \text{is of order d}\right\}.$$

Given any positive divisor $t$ of $q-1$, we say that an element $x\in\F_q$ is {\it $t$-free} if $x\ne
0$ and $x=y^d$ (where $y\in\F_q$, $d>0$ and $d\mid t$) implies $d=1$. It is easy to verify that an
element $g\in\F_q$ is a primitive element of $\F_q$ if and only if $g$ is $(q-1)$-free. Moreover, by
the results obtained by Cohen and Huczynska \cite{CH2003,CH2010}, for any $x\in\F_q$ we have
\begin{equation}\label{Eq. Characteristic Function of e-free elements}
\frac{\varphi(t)}{t}\sum_{d\mid t}\frac{\mu(d)}{\varphi(d)}\sum_{\chi\in G_d}\chi(x)=\begin{cases}
1
&\mbox{if}\ x\ \text{is $t$-free},
\\0
&\mbox{otherwise.}\end{cases}
\end{equation}
On the other hand, for any positive divisor $k$ of $q-1$, fix a character
$\chi_k\in\widehat{\F_q^{\times}}$ of order $k$. Then for any $x\in\F_q$ we have
\begin{equation}\label{Eq. Characteristic Function of k-th power elements}
\frac{1}{k}\sum_{i=0}^{k-1}\chi_k^i(x)=\begin{cases}
1
&\mbox{if}\ x\in\F_q^{\times k},
\\0
&\mbox{otherwise.}\end{cases}
\end{equation}
We also need the following result which is known as the Weil Theorem (cf. \cite[Theorem 5.41]{LN}).
\begin{lemma}\label{Lem. Weil's theorem}{\rm (Weil's Theorem)}
Let $\psi\in\widehat{\F_q^{\times}}$ be a character of order $m>1$, and let $f(x)\in\F_q[x]$ be a
monic polynomial which is not of the form $g(x)^m$ for any $g(x)\in\F_q[x]$. Let $r$ be the number of
distinct roots of $f(x)$ in the algebraic closure of $\F_q$. Then for any $a\in\F_q$ we have
\begin{equation}\label{Eq. Weil's Bound}
\left|\sum_{x\in\F_q}\psi(af(x))\right|\le (r-1)q^{1/2}.
\end{equation}
\end{lemma}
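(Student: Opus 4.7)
This lemma is the classical Weil bound for multiplicative character sums, taken verbatim from Lidl and Niederreiter, so in the paper itself all that is required is a citation. What follows is the standard strategy one would use to reconstruct the proof if pressed.

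The plan is to attach to the pair $(\psi, f)$ the smooth projective curve $C$ over $\F_q$ that is birational to the affine curve $y^m = af(x)$. The hypothesis that $f$ is not of the form $g(x)^m$ forces $C$ to be geometrically irreducible. Applying the Riemann-Hurwitz formula to the degree-$m$ cyclic cover $C \to \mathbb{P}^1$, $(x,y) \mapsto x$, one confines the ramification to the $r$ distinct roots of $af(x)$ together with (possibly) the point at infinity, and obtains the genus bound $g_C \ls (r-1)(m-1)/2$. Next, by the orthogonality relation
$$\#\{y \in \F_q : y^m = b\} = \sum_{j=0}^{m-1}\psi^j(b) \quad (b \in \F_q^{\times}),$$
summation over $x$ rewrites $\#C(\F_q)$ as $q + \sum_{j=1}^{m-1}\sum_{x \in \F_q}\psi^j(af(x))$ up to a bounded correction stemming from the zeros of $af$ and the fibre over infinity.

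The Weil estimate $|\#C(\F_q) - (q+1)| \ls 2g_C\sqrt{q}$ then yields a bound of the right shape for the aggregate character sum. To isolate the contribution of $\psi$ alone rather than the combined sum over its nontrivial powers, one passes to the Dirichlet-style L-function associated to $\psi$ and $f$, which is a polynomial in $T$ of degree $r-1$; the Riemann hypothesis for curves forces its reciprocal roots to have absolute value $q^{1/2}$, whence one reads off $\bigl|\sum_x \psi(af(x))\bigr| \ls (r-1)q^{1/2}$ from the linear coefficient. The main obstacle is of course the Riemann hypothesis for curves over finite fields itself, which supplies the bound $|\#C(\F_q) - (q+1)| \ls 2g_C\sqrt{q}$; this is substantially deeper than any other ingredient in the paper and is precisely the reason the authors cite [LN, Theorem 5.41] rather than reconstruct the proof in place.
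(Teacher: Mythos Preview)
Your assessment is correct: the paper does not prove this lemma but simply states it with a reference to \cite[Theorem 5.41]{LN}, exactly as you anticipated. Your sketch of the underlying argument via the curve $y^m=af(x)$, Riemann--Hurwitz, and the Riemann hypothesis for curves is the standard route and is accurate, though it goes well beyond what the paper itself supplies.
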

For any positive integer $n$, let $\omega(n)$ denote the number of distinct prime factors of $n$.
Robin \cite[Theorem 11]{R} obtained the following upper bound on $\omega(n)$.
\begin{lemma}\label{Lem. Upper bound of w(n)}{\rm (Robin)}
For any positive integer $n\ge3$, we have
\begin{equation}\label{Eq. upper bound of w(n)}
\omega(n)\le 1.38402\log n/\log\log n.
\end{equation}
\end{lemma}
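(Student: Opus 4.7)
The plan is to reduce the inequality to a statement about primorials. Fix $n\ge 3$ with $\omega(n)=k$, and let $p_1<p_2<\cdots<p_k$ denote the first $k$ primes. Then trivially $n\ge p_1p_2\cdots p_k$, so $\log n\ge\theta(p_k)$, where $\theta(x)=\sum_{p\le x}\log p$ is Chebyshev's function. The function $u\mapsto \log u/u$ is strictly decreasing for $u>e$, so the ratio $\log\log n/\log n$ is a decreasing function of $n$ for $n>e^e$. Hence for fixed $\omega(n)=k$ the quantity $k\log\log n/\log n$ is maximized at $n=p_1p_2\cdots p_k$, and the problem reduces to proving
$$\frac{k\log\theta(p_k)}{\theta(p_k)}\le 1.38402$$
for every $k\ge 1$, together with a direct check on the finitely many $n\le e^e$.

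To handle the reduced inequality along the primorial sequence, I would invoke Rosser--Schoenfeld type explicit estimates. The two main analytic inputs are a lower bound of the form $\theta(x)\ge x(1-c/\log x)$, valid beyond an explicit threshold, together with the inequality $p_k\ge k(\log k+\log\log k-1)$ for $k\ge 6$. These combine to give $\theta(p_k)\ge k\log k(1-\varepsilon(k))$ and $\log\theta(p_k)\le \log k+\log\log k+O(1)$. Substituting yields
$$\frac{k\log\theta(p_k)}{\theta(p_k)}\le \frac{\log k+\log\log k+O(1)}{\log k\cdot(1-\varepsilon(k))},$$
and the right-hand side tends to $1$ as $k\to\infty$. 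Consequently, for every $k$ above some explicit cutoff $k_0$ the left-hand side is already comfortably below $1.38402$.

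The main obstacle is pinning down the precise constant $1.38402$, which is essentially the supremum of $\omega(n)\log\log n/\log n$ over all $n\ge 3$. The asymptotic analysis above only secures the inequality for $k\ge k_0$; the extremum is attained at a moderate primorial, so the proof must conclude with a finite numerical verification of $k\log\theta(p_k)/\theta(p_k)\le 1.38402$ for each $k\le k_0$. The subtle point is calibrating the Rosser--Schoenfeld bounds so that $k_0$ is both small enough to make the finite check tractable and large enough that the explicit estimates used are actually valid past their own thresholds; sharper estimates shrink $k_0$ but apply only for larger $x$, and this balance between analytic input and finite computation is where the real care of Robin's argument lies.
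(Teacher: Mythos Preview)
The paper does not actually prove this lemma: it is quoted verbatim from Robin \cite[Theorem 11]{R} and used as a black box, so there is no ``paper's own proof'' to compare against. Your sketch is the standard route to such bounds and is, in outline, what Robin does: reduce to primorials via $n\ge p_1\cdots p_k$ and the monotonicity of $u\mapsto(\log u)/u$, then control $k\log\theta(p_k)/\theta(p_k)$ using explicit Chebyshev/prime-counting estimates, leaving a finite range to be checked by hand. The asymptotic part of your argument is fine; as you correctly identify, the entire content of the specific constant $1.38402$ lies in the careful numerical work (balancing the validity thresholds of the explicit estimates against the size of the residual finite check), and that is precisely what Robin's paper carries out. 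So your proposal is a faithful outline of the original source, not an alternative to anything in the present paper.
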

Following the argument in \cite[(4.1)]{CET2015} for all $q\ge 17$ we have
\begin{equation}\label{Eq. w(q-1)}
\omega(q-1)\le 1.38402\log q/\log\log q.
\end{equation}
Recall that $W(q-1)$ denote the number of all positive square-free divisors of $q-1$. It is easy to
see that
\begin{equation}\label{Eq. upper bound of bigW (q-1)}
W(q-1)=2^{\omega(q-1)}\le q^{0.96/\log\log q}.
\end{equation}
Now we prove our first result.

\noindent{\bf Proof of Theorem \ref{Thm. A}.} Let $\F_q$ be the finite field of $q$ elements, and let
$f(x)=ax^2+bx+c$ be a quadratic polynomial in $\F_q[x]$ with $b^2-4ac\ne0$.
For any positive divisor $k$ of $q-1$ with $k\ge2$, we define
\begin{equation*}\label{Eq. Definition of N(q-1,k)}
N(q-1,k):=\left|\{x\in\F_q: x\ \text{is a primitive element and}\ f(x)\in\F_q^{\times k}\}\right|,
\end{equation*}
where $|S|$ denotes the cardinality of a set $S$. By (\ref{Eq. Characteristic Function of e-free
elements}) and (\ref{Eq. Characteristic Function of k-th power elements}), we have
\begin{equation}\label{Eq. N(q-1,k) in the proof of Thm. A}
N(q-1,k)=\frac{\varphi(q-1)}{k(q-1)}\sum_{i=0}^{k-1}\sum_{d\mid
q-1}\frac{\mu(d)}{\varphi(d)}\sum_{\chi\in G_d}S_i(\chi),
\end{equation}
where
$$S_i(\chi):=\sum_{x\in\F_q}\chi(x)\chi_k^i(f(x)).$$
{\bf Case I.} $i=0$ and $\chi=\ve$ is trivial.

In this case, we clearly have
\begin{equation}\label{Eq. A positive part in the proof of Thm. A}
S_0(\ve)\ge q-3.
\end{equation}
{\bf Case II. } $0<i\le k-1$ or the order $d$ of $\chi$ is greater than $1$.

In this case, let $m$ be the least common multiple of $d$ and $k$. Set $dc_1=m$ and $kc_2=m$. Clearly
we can find a character $\chi_m\in\widehat{\F_q^{\times}}$ of order $m$ and choose positive integers
$r_1,r_2$ with $(r_1,d)=1$, $(r_2,k)=1$ such that $\chi_d=\chi_m^{c_1r_1}$ and
$\chi_k=\chi_m^{c_2r_2}$. We claim that by the suitable choice of $\chi_m$, $r_1$ and $r_2$,
the polynomial $x^{c_1r_1}f(x)^{ic_2r_2}\ne g(x)^{m}$ for any $g(x)\in\F_q[x]$.

Suppose first that $d>1$ and that $x^{c_1r_1}f(x)^{ic_2r_2}=g(x)^{m}$ for some $g(x)\in\F_q[x]$. Let
$\deg(g)$ denote the degree of $g(x)$, and let $\ord_x(g):=\max\{n\in\Z: x^n\mid g(x)\}$. Then we have
\begin{equation}\label{Eq. not a m-th power A}
c_1r_1+2ic_2r_2=m\cdot\deg(g),
\end{equation}
\begin{equation}\label{Eq. not a m-th power B}
c_1r_1+ic_2r_2\cdot\ord_x(f)=m\cdot\ord_x(g).
\end{equation}
This gives
$$ic_2r_2(2-\ord_x(f))=kc_2(\deg(g)-\ord_x(g)).$$
Noting that $(r_2,k)=1$, we obtain $k\mid i(2-\ord_x(f))$.
As $b^2-4ac\ne0$, we have $\ord_xf\le 1$ and $\deg(g)>\ord_xg$. Combining this with $0\le i\le k-1$,
the above equality implies that $\ord_xf=0$, $k=2i>0$ and $r_2=\deg(g)-\ord_x(g)$. By (\ref{Eq. not a
m-th power B}) we get $r_1=d\cdot\ord_x(g)$. As $(r_1,d)=1$, we have $d=1$. This contradicts $d>1$.

Suppose now $d=1$ and $0<i\le k-1$. We can choose $m=k$, $\chi_m=\chi_k$, $c_1=k$, $r_1=r_2=1$ and
$c_2=1$. We also claim that $x^{c_1r_1}f(x)^{ic_2r_2}\ne g(x)^{m}$ for any $g(x)\in\F_q[x]$. In fact,
suppose that
$$x^kf(x)^i=g(x)^k,$$
for some $g(x)\in\F_q[x]$. Then we have
$$f(x)^i=\(\frac{g(x)}{x}\)^k=h(x)^k$$
for some $h(x)\in\F_q[x]$. As $0<i\le k-1$, we obtain $k=2i$ and $\deg(h)=1$. This is a contradiction
since $f(x)$ has no multiple roots. Our claim therefore holds. As
$$S_i(\chi)=\sum_{x\in\F_q}\chi_m(x^{c_1r_1}f(x)^{ic_2r_2}),$$
By Lemma \ref{Lem. Weil's theorem} if $0<i\le k-1$ or the order $d$ of $\chi$ is greater than $1$,
then
\begin{equation}\label{Eq. C in the proof of Thm. A}
\left|S_i(\chi)\right|\le 2q^{1/2}.
\end{equation}
Combining (\ref{Eq. N(q-1,k) in the proof of Thm. A}) with (\ref{Eq. A positive part in the proof of
Thm. A}) and (\ref{Eq. C in the proof of Thm. A}), when $q\ge 5$ we have
\begin{align*}
\frac{k(q-1)}{\varphi(q-1)}N(q-1,k)&\ge q-3-2\sqrt{q}(W(q-1)-1)-(k-1)2\sqrt{q}W(q-1)\\
&=q+2\sqrt{q}-3-2kW(q-1)\sqrt{q}\\
&\ge q-2kW(q-1)\sqrt{q}.
\end{align*}
By (\ref{Eq. upper bound of bigW (q-1)}) when $q>\max\{e^{e^{3}},(2k)^6\}$ we have
$$q-2kW(q-1)\sqrt{q}\ge\sqrt{q}(q^{0.5}-2kq^{0.32})>0.$$
This completes the proof of Theorem \ref{Thm. A}.\qed
\maketitle
\section{Proof of Theorem \ref{Thm. B}}
\setcounter{lemma}{0}
\setcounter{theorem}{0}
\setcounter{corollary}{0}
\setcounter{remark}{0}
\setcounter{equation}{0}
\setcounter{conjecture}{0}
In this section, we adopt notations defined in Section 2. Let $\F_q$ be the finite field of $q$
elements, and let $k\ge2$ be a positive divisor of $q-1$. Let $f(x)=ax^2+bx+c$ be a quadratic
polynomial in $\F_q[x]$ with $b^2-4ac\ne0$. For any positive factor $t$ of $q-1$, we define
\begin{equation}\label{Eq. Definition of N(e,k)}
N(t,k):=\left|\{x\in\F_q: x\ \text{is $t$-free and}\ f(x)\in\F_q^{\times k}\}\right|.
\end{equation}
For any positive integer $n$, recall that ${\rm Rad}(n)$ denotes the product of all distinct primes
dividing $n$.

With the method used in \cite[Lemma 1]{CET2015B}, we obtain the following result.
\begin{lemma}\label{Lem. improvement of the bound}
Let $t$ be a divisor of $q-1$. Suppose ${\rm Rad}(t)<{\rm Rad}(q-1)$. Let $p_1,\cdots,p_s$ be the
distinct primes which divide $q-1$ but not $t$, and set $\delta=1-\sum_{i=1}^sp_i^{-1}$. Then we have
the following result:
\begin{equation}\label{Eq. A in Lem improvement of the bounds}
N(q-1,k)\ge\sum_{i=1}^sN(p_it,k)-(s-1)N(t,k).
\end{equation}
Hence we have
\begin{equation}\label{Eq. improvement of the bound}
N(q-1,k)\ge\sum_{i=1}^s\(N(p_it,k)-\(1-\frac{1}{p_i}\)N(t,k)\)+\delta N(t,k).
\end{equation}
\end{lemma}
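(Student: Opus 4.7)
The plan is to prove \eqref{Eq. A in Lem improvement of the bounds} by an elementary Bonferroni (union-bound) argument on suitable subsets of $\F_q$, and then obtain \eqref{Eq. improvement of the bound} by purely algebraic rearrangement.

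First I would translate everything into set language. Recall that an element $x\in\F_q^\times$ is $n$-free precisely when, for every prime $p\mid n$, $x$ is not a $p$-th power in $\F_q$; equivalently, being $n$-free depends only on $\mathrm{Rad}(n)$. Since $p_1,\dots,p_s$ are exactly the primes dividing $q-1$ but not $t$, we have $\mathrm{Rad}(q-1)=\mathrm{Rad}(t)\cdot p_1\cdots p_s$, and therefore
\[
x\ \text{is}\ (q-1)\text{-free}\iff x\ \text{is}\ t\text{-free and}\ x\ \text{is}\ p_it\text{-free for every}\ 1\le i\le s.
\]
Define
\[
B:=\{x\in\F_q:\ x\ \text{is }t\text{-free and}\ f(x)\in\F_q^{\times k}\},\qquad B_i:=\{x\in B:\ x\ \text{is}\ p_it\text{-free}\},
\]
so that $|B|=N(t,k)$, $|B_i|=N(p_it,k)$, and, by the characterization above,
\[
\bigcap_{i=1}^s B_i=\{x\in\F_q:\ x\ \text{is primitive and}\ f(x)\in\F_q^{\times k}\},
\]
which has cardinality $N(q-1,k)$.

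Next I would apply the elementary identity $B\setminus\bigcap_{i=1}^s B_i=\bigcup_{i=1}^s(B\setminus B_i)$ together with the union bound to get
\[
|B|-\Bigl|\bigcap_{i=1}^s B_i\Bigr|\;\le\;\sum_{i=1}^s\bigl(|B|-|B_i|\bigr)=s|B|-\sum_{i=1}^s|B_i|.
\]
Rearranging yields $|\bigcap_i B_i|\ge\sum_i|B_i|-(s-1)|B|$, which is exactly \eqref{Eq. A in Lem improvement of the bounds}.

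For \eqref{Eq. improvement of the bound} the point is purely arithmetic: add and subtract $\sum_{i=1}^s(1-1/p_i)N(t,k)$ on the right-hand side of \eqref{Eq. A in Lem improvement of the bounds}. The residual coefficient of $N(t,k)$ is
\[
\sum_{i=1}^s\Bigl(1-\tfrac{1}{p_i}\Bigr)-(s-1)\;=\;s-\sum_{i=1}^s\tfrac{1}{p_i}-(s-1)\;=\;1-\sum_{i=1}^s\tfrac{1}{p_i}\;=\;\delta,
\]
giving \eqref{Eq. improvement of the bound}. The only genuinely content-bearing step is the set-theoretic characterization of $(q-1)$-free elements in the first paragraph; once that is in place, both inequalities are formal. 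I do not anticipate a serious obstacle, but if anything one must be careful that "$n$-free" really does depend only on $\mathrm{Rad}(n)$, which follows directly from the definition since the condition "$x=y^d$ for some $d\mid n$, $d>1$" can always be reduced to the case $d$ prime.
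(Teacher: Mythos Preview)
Your proposal is correct and follows essentially the same idea as the paper: the paper argues element by element that each $x$ contributes $+1$ to the right-hand side of \eqref{Eq. A in Lem improvement of the bounds} if it is primitive with $f(x)\in\F_q^{\times k}$ and a nonpositive integer otherwise, which is exactly the Bonferroni/union-bound inequality you wrote out in set language. Your derivation of \eqref{Eq. improvement of the bound} by the algebraic rearrangement is likewise what the paper intends when it says the second inequality is ``clearly a consequence'' of the first.
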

\begin{proof}
For any $x\in\F_q$, it is clear that $x$ contributes $+1$ to the right hand side of (\ref{Eq. A in Lem
improvement of the bounds}) if $x$ is a primitive element and $f(x)\in\F_q^{\times k}$. Suppose that
$x$ is not a primitive element or $f(x)\not\in\F_q^{\times k}$. Then clearly $x$ contributes $0$ or a
negative integer to the right hand side of (\ref{Eq. A in Lem improvement of the bounds}). Hence
inequality (\ref{Eq. A in Lem improvement of the bounds}) holds and (\ref{Eq. improvement of the
bound}) is clearly a consequence of (\ref{Eq. A in Lem improvement of the bounds}).

This completes the proof.
\end{proof}
We also need the following result.
\begin{lemma}\label{Lem. N(e,k)}
Suppose $q\ge5$ is a prime power, and let $t$ be a positive divisor of $q-1$. Then we have
\begin{equation}\label{Eq. N(e,k)}
N(t,k)\ge\frac{\varphi(t)}{kt}\(q-2kW(t)q^{1/2}\).
\end{equation}
Moreover, for any prime $p$ dividing $q-1$ but not $t$ we have
\begin{equation}\label{Eq. N(pe,k)-N(e,k)}
\left|N(pt,k)-\(1-\frac{1}{p}\)N(t,k)\right|\le \frac{\varphi(pt)}{pt}2W(t)q^{1/2}.
\end{equation}
\end{lemma}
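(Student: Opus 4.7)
The proof proceeds by the same character-sum strategy used for Theorem \ref{Thm. A}. Applying (\ref{Eq. Characteristic Function of e-free elements}) and (\ref{Eq. Characteristic Function of k-th power elements}) to the definition of $N(t,k)$ gives
$$N(t,k)=\frac{\varphi(t)}{kt}\sum_{i=0}^{k-1}\sum_{d\mid t}\frac{\mu(d)}{\varphi(d)}\sum_{\chi\in G_d}S_i(\chi),\qquad S_i(\chi):=\sum_{x\in\F_q}\chi(x)\chi_k^i(f(x)).$$
My plan is to isolate the ``main'' contribution $(i,d,\chi)=(0,1,\varepsilon)$ and to bound every other term through Lemma \ref{Lem. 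Weil's theorem}.

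The main term satisfies $S_0(\varepsilon)\ge q-3$, since it counts $x\in\F_q$ with $x\ne 0$ and $f(x)\ne 0$, and $f$ has at most two roots. For any other $(i,\chi)$ appearing in the sum we are exactly in Case II of the proof of Theorem \ref{Thm. A} (either $0<i\le k-1$, or $\chi$ has order $d>1$), so the same analysis shows that $x^{c_1r_1}f(x)^{ic_2r_2}$ is not an $m$-th power in $\F_q[x]$, and Weil's theorem gives $|S_i(\chi)|\le 2q^{1/2}$ (the polynomial has at most three distinct roots). Summing over $\chi\in G_d$ the weighted contribution at index $(i,d)$ is bounded by $|\mu(d)|\cdot 2q^{1/2}$, and $\sum_{d\mid t}|\mu(d)|=W(t)$ produces $2W(t)q^{1/2}$ per index $i$. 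Removing the main term from the $i=0$ slot yields a total error
$$2q^{1/2}\bigl((W(t)-1)+(k-1)W(t)\bigr)=2q^{1/2}\bigl(kW(t)-1\bigr),$$
hence $\frac{kt}{\varphi(t)}N(t,k)\ge q-3+2q^{1/2}-2kW(t)q^{1/2}\ge q-2kW(t)q^{1/2}$ whenever $q\ge 5$, which is (\ref{Eq. N(e,k)}).

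For the second bound the hypothesis $p\nmid t$ gives $\varphi(pt)=(p-1)\varphi(t)$, hence $(1-\tfrac{1}{p})\frac{\varphi(t)}{kt}=\frac{\varphi(pt)}{kpt}$. The divisors of $pt$ split as $\{d:d\mid t\}\sqcup\{pd:d\mid t\}$, and the first family reproduces exactly the expansion of $(1-\tfrac1p)N(t,k)$ with the common prefactor $\frac{\varphi(pt)}{kpt}$; using $\mu(pd)=-\mu(d)$ and $\varphi(pd)=(p-1)\varphi(d)$, I expect the residual identity
$$N(pt,k)-\Bigl(1-\frac{1}{p}\Bigr)N(t,k)=\frac{\varphi(pt)}{kpt}\sum_{i=0}^{k-1}\sum_{d\mid t}\frac{\mu(pd)}{\varphi(pd)}\sum_{\chi\in G_{pd}}S_i(\chi).$$
Every character here has order $pd\ge p>1$, so Weil applies to all summands and $|S_i(\chi)|\le 2q^{1/2}$ throughout; the same bookkeeping as above bounds the inner triple sum in absolute value by $2kW(t)q^{1/2}$, which yields (\ref{Eq. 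N(pe,k)-N(e,k)}).

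The only non-mechanical step is the verification of Weil's hypothesis for each summand, and this has already been carried out inside the proof of Theorem \ref{Thm. A}; beyond that, the argument is just careful accounting of $\mu$, $\varphi$, and $|G_d|=\varphi(d)$, so no new obstacle arises.
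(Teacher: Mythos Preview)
Your proposal is correct and follows exactly the same route as the paper: the same character-sum expansion of $N(t,k)$, isolation of the main term $S_0(\varepsilon)\ge q-3$, invocation of the Case~II analysis from Theorem~\ref{Thm. A} to get $|S_i(\chi)|\le 2q^{1/2}$, and the same divisor splitting $d'\mid pt\Leftrightarrow d'\mid t$ or $d'=pd$ to obtain the identity (\ref{Eq. in the proof of lemma N(e,k)}) and hence (\ref{Eq. N(pe,k)-N(e,k)}). You have in fact supplied more of the bookkeeping details than the paper, which simply appeals to ``essentially the same method'' as in Theorem~\ref{Thm. A}.
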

\begin{proof}
{\rm (i)} By (\ref{Eq. Characteristic Function of e-free elements}) and (\ref{Eq. Characteristic
Function of k-th power elements}) we have
$$N(t,k)=\frac{\varphi(t)}{kt}\sum_{i=0}^{k-1}\sum_{d\mid t}\frac{\mu(d)}{\varphi(d)}\sum_{\chi\in
G_d}S_i(\chi),$$
where
$$S_i(\chi)=\sum_{x\in\F_q}\chi(x)\chi_k^i(f(x)).$$
With essentially the same method in the proof of Theorem \ref{Thm. A}, one can easily verify (\ref{Eq.
N(e,k)}).

{\rm (ii)} As
$$N(pt,k)=\frac{\varphi(pt)}{kpt}\sum_{i=0}^{k-1}\sum_{d'\mid
pt}\frac{\mu(d')}{\varphi(d')}\sum_{\chi\in G_{d'}}S_i(\chi),$$
one can verify that
\begin{equation}\label{Eq. in the proof of lemma N(e,k)}
N(pt,k)-\(1-\frac{1}{p}\)N(t,k)=\frac{\varphi(pt)}{kpt}\sum_{i=0}^{k-1}\sum_{d\mid
t}\frac{\mu(pd)}{\varphi(pd)}\sum_{\chi\in G_{pd}}S_i(\chi).
\end{equation}
With essentially the same method in the proof of Theorem \ref{Thm. A}, it is easy to verify that
$$\left|S_i(\chi)\right|\le2q^{1/2}$$
for all $S_i(\chi)$ which appear in (\ref{Eq. in the proof of lemma N(e,k)}).
Hence (\ref{Eq. N(pe,k)-N(e,k)}) holds.

This completes the proof.
\end{proof}
Now we are in a position to prove Theorem \ref{Thm. B}.

\noindent{\bf Proof of Theorem \ref{Thm. B}.} Combining Lemma \ref{Lem. improvement of the bound} with
Lemma \ref{Lem. N(e,k)}, we have
\begin{align*}
\frac{kt}{\delta\varphi(t)}N(q-1,k)&\ge
q-2kW(t)q^{1/2}-2kW(t)q^{1/2}\frac{\sum_{i=1}^s\(1-\frac{1}{p_i}\)}{\delta}\\
&=\sqrt{q}\(\sqrt{q}-2kW(t)\(2+\frac{s-1}{\delta}\)\)>0.
\end{align*}
This completes the proof.\qed

Now we turn to Sun's conjecture.

\noindent{\bf Proof of Corollary \ref{Coro. A}.} Let $P_i$ denote the $i$-th prime. Note that
$\prod_{i=1}^{10}P_i=6469693230>e^{e^3}$. When $k=2$, if the number of distinct prime factors of $q-1$
is not less than $10$, then the corollary holds by Theorem \ref{Thm. A}. Hence it is sufficient to
consider the cases that the number of distinct prime factor prime factors of $q-1$ is less than $10$.
Note that $\delta\geq1-\Sigma_{i=w(q-1)-s+1}^{w(q-1)}\frac{1}{P_{i}}$. Hence we can get a upper bound
for $q$ such that the desired result holds whenever $q$ is greater than this bound. On the other hand,
$q>q-1\geq \prod_{i=1}^{w(q-1)}P_{i}$ makes it possible to reduce the cases furthermore. The detailed
data will be listed in Table 1 below.

As a result, it suffices to check the cases $q=3^2,3^3,3^4,5^2,5^3,7^2,11^2,13^2$ and those odd primes
less than 523. Moreover, Booker, Cohen, Sutherland and Trudgian \cite{BCST2019} have proved that there
exists a primitive element $g$ such that $f(g)$ is also a primitive element whenever $q>211$. Note
that every primitive element must be in $\F_q^{\times}\setminus\F_q^{\times2}$. Suppose
$a\not\in\F_q^{\times 2}$. Then one can get the desired result by applying the above result to
$f(x)/a$. Hence it is sufficient to consider the cases that $f(x)=x^2+bx+c$ with $b^2-4c\ne0$ when
$q>211$. After computation, we can verify corollary \ref{Coro. A}. \qed

\begin{table}
	\centering
	\caption{data}
	\begin{tabular}
		{>{\columncolor{white}}rccccc}
		\toprule[1pt]
		\rowcolor[gray]{0.9}	&$w(q-1)$ &$\prod_{i=1}^{w(q-1)}P_{i}$ &$s=1$ &$s=2$ &$s=3$\\
		\midrule
		&1 &2 &32 & &\\
		&2 &6 &128 &128 &\\
		&3 &30 &512 &265 &\\
		&4 &210 &2048 &901 &523\\
		&5 &2310 & &3384 &498\\
		&6 &30030 & &13114 &\\
		&7 &510510 & &51725 &\\
		&8 &9699690 & &204828 &\\
		&9 &223092870 & &814305 &\\
		\bottomrule[1pt]
	\end{tabular}
\end{table}
\section{Acknowledgment}

\acknowledgment\ This research was supported by the National Natural Science Foundation of
China (Grant No. 11971222). The first author was supported by NUPTSF (Grant No. NY220159).

\end{document}